\providecommand{\U}[1]{\protect\rule{.1in}{.1in}}
\newtheorem{theorem}{Theorem}[section]
\theoremstyle{plain}
\newtheorem{condition}[theorem]{Condition}
\newtheorem{corollary}[theorem]{Corollary}
\newtheorem{example}[theorem]{Example}
\newtheorem{lemma}[theorem]{Lemma}
\newtheorem{proposition}[theorem]{Proposition}
\numberwithin{equation}{section}
\begin{document}
\title[Global boundedness of the fundamental solution]{Global boundedness of the fundamental solution of parabolic equations with
unbounded coefficients}
\author{Esther Bleich}
\address{Karlsruhe Institute of Technology, Department of Mathematics, Institute for
Analysis, 76128 Karlsruhe, Germany }
\email{esther.bleich@partner.kit.edu}
\date{Januar 31, 2013}
\subjclass[2000]{ 35A08, 47D07, 60J35}
\keywords{fundamental solution, transition probabilities,\ parabolic Cauchy problem,
Markov semigroup}

\begin{abstract}
The purpose of this paper is to obtain an upper bound for the fundamental
solution for parabolic Cauchy problem $\partial_{t}u=Au$, $u(x,0)=f(x)$, on $%
\mathbb{R}
^{N}\times(0,\infty)$, where $A$ is a second order elliptic partial
differential operator with unbounded coefficients such that its potential and
the potential of the formal adjoint operator $A^{\ast}$ are bounded from below.

\end{abstract}
\maketitle

\section{Introduction}

Let $A$ be a second order elliptic partial differential operator with real
coefficients given by%
\begin{equation}
A=\sum_{i,j=1}^{N}D_{j}\left(  a_{ij}D_{i}\right)  +\sum_{i=1}^{N}F_{i}%
D_{i}-H=A_{0}+F\cdot D-H\text{,} \label{e_Definition operator A}%
\end{equation}
where $A_{0}=\sum_{i,j=1}^{N}D_{j}\left(  a_{ij}D_{i}\right)  $ and $F=\left(
F_{i}\right)  _{i=1,...,N}$. We consider the parabolic Cauchy problem%
\begin{equation}
\left\{
\begin{array}
[c]{ll}%
\partial_{t}u(x,t)=Au(x,t)\text{,} & x\in%
\mathbb{R}
^{N}\text{, }t>0\text{,}\\
u(x,0)=f(x)\text{,} & x\in%
\mathbb{R}
^{N}\text{,}%
\end{array}
\right.  \label{e_Definition parabolic problem}%
\end{equation}
where $f\in C_{b}(%
\mathbb{R}
^{N})$ for $N\in%
\mathbb{N}
$ is given.

It is known that if $a_{ij},D_{j}a_{ij},F_{i},H\in C_{loc}^{\alpha}(%
\mathbb{R}
^{N})$ for all $i,j\in\{1,...,N\}$ and some $\alpha\in(0,1)$ and if
$\inf_{x\in%
\mathbb{R}
^{N}}H(x)>-\infty$, then problem (\ref{e_Definition parabolic problem}) has at
least one solution $u\in C(%
\mathbb{R}
^{N}\times\lbrack0,\infty))\cap C^{2,1}(%
\mathbb{R}
^{N}\times(0,\infty))$ given by%
\[
u(x,t)=\int_{%
\mathbb{R}
^{N}}p(x,y,t)f(y)dy\text{,\qquad}(x,t)\in%
\mathbb{R}
^{N}\times\lbrack0,\infty)\text{,}%
\]
where $p=p(x,y,t)>0$, $(x,y,t)\in%
\mathbb{R}
^{N}\times%
\mathbb{R}
^{N}\times(0,\infty)$, is the fundamental solution (see \cite[Theorem
2.2.5]{bl-buch}).

We assume the following conditions on the coefficients of $A$ which will be
kept without further mentioning.

\begin{condition}
\label{Condition_The main conditions}\hfill

\begin{enumerate}
\item[(i)] $N\geq3$.

\item[(ii)] $a_{ij}\in C_{loc}^{2+\alpha}(%
\mathbb{R}
^{N})$, $F_{i}\in C_{loc}^{1+\alpha}(%
\mathbb{R}
^{N})$, $H\in C_{loc}^{\alpha}(%
\mathbb{R}
^{N})$, $a_{ij}=a_{ji}$ for all $i,j=1,...,N$ and some $\alpha\in(0,1)$.

\item[(iii)] $H(x)\geq H_{0}$ and $\operatorname{div}F(x)+H(x)\geq H_{0}%
^{\ast}$ for each $x\in%
\mathbb{R}
^{N}$, where $H_{0},H_{0}^{\ast}\leq0$.

\item[(iv)] There exists a constant $\lambda>0$ such that%
\begin{equation}
\lambda\left\vert \xi\right\vert ^{2}\leq\sum_{i,j=1}^{N}a_{ij}(x)\xi_{i}%
\xi_{j}\qquad\text{for all }x,\xi\in%
\mathbb{R}
^{N}\text{.} \label{e_ellipticity of Ao}%
\end{equation}

\end{enumerate}
\end{condition}

Notice that the diffusion coefficients $a_{ij},_{i,j=1,...,N}$, the drift
$F=(F_{i})_{i=1,...,N}$ and the potential $H$ are not assumed to be bounded in
$%
\mathbb{R}
^{N}$.

\subsection{The main result}

We prove that under above conditions the fundamental solution $p$ satisfies%
\begin{equation}
p(x,y,t)\leq C_{N,\lambda}e^{\gamma t}t^{-\frac{N}{2}}\text{,}\qquad(x,y,t)\in%
\mathbb{R}
^{N}\times%
\mathbb{R}
^{N}\times(0,\infty)\text{,} \label{e_main result}%
\end{equation}
for the constants%
\begin{equation}
C_{N,\lambda}=\frac{2^{N-1}\Gamma\left(  \frac{N+1}{2}\right)  }{\pi
^{\frac{N+1}{2}}(\lambda(N-2))^{\frac{N}{2}}} \label{e_C_N_lambda}%
\end{equation}
and%
\begin{equation}
\gamma=-\frac{3}{4}\left(  H_{0}^{\ast}+H_{0}\right)  \geq0\text{.}
\label{e_gamma}%
\end{equation}

\subsection{\label{Section_Notation}Notation}

For $x\in%
\mathbb{R}
^{N}$, $\left\vert x\right\vert $ denotes the Euclidean norm. The function
spaces, $L^{q}(\Omega)$ spaces, $1\leq q<\infty$, $\Omega\subseteq%
\mathbb{R}
^{N}$ are always meant with respect to the Lebesgue measure and are endowed
with the usual norm%
\[
\left\Vert \psi\right\Vert _{L^{q}(\Omega)}=\left(  \int_{\Omega}\left\vert
\psi(y)\right\vert ^{q}dy\right)  ^{\frac{1}{q}}%
\]
For $0<\alpha<1$ we denote by $C_{loc}^{k+\alpha}(\Omega)$ the space of all
functions $u$ whose $k^{\text{th}}$ derivatives are locally $\alpha
$-H\"{o}lder continuous. Furthermore, we denote by $C_{loc}^{2+\alpha
,1+\alpha/2}(\Omega\times J)$, where $J\subset\lbrack0,\infty)$ is an
interval, the space of all functions $u$ such that $u,\partial_{t}u,D_{i}u$
and $D_{ij}u$ are locally $\alpha$-H\"{o}lder continuous. $B(x,R)$ denotes the
open ball of $%
\mathbb{R}
^{N}$ of radius $R$ and centre $x$. If $u:%
\mathbb{R}
^{N}\times J\rightarrow%
\mathbb{R}
$, where $J\subset\lbrack0,\infty)$ is an interval, we use the notations%
\[
\partial_{t}u=\frac{\partial u}{\partial t}\text{,\qquad}D_{i}u=\frac{\partial
u}{\partial x_{i}}\text{,\qquad}D_{ij}u=D_{i}D_{j}u\text{,\qquad}Du=\left(
D_{1}u,...,D_{N}u\right)
\]
and%
\[
\left\vert Du\right\vert ^{2}=\sum_{i=1}^{N}\left\vert D_{i}u\right\vert
^{2}\text{.}%
\]
We write $a(\xi,\nu)$ for $\sum_{i,j=1}^{N}a_{ij}(\cdot)\xi_{i}\nu_{j}$ and
$\xi,\nu\in%
\mathbb{R}
^{N}$. It then holds%
\begin{equation}
\left\vert a\left(  \xi,\nu\right)  \right\vert ^{2}\leq a\left(  \xi
,\xi\right)  a\left(  \nu,\nu\right)  \text{\qquad for all }\xi,\nu\in%
\mathbb{R}
^{N}\text{.} \label{e_Schwarz inequality}%
\end{equation}
We further set%
\[
\left\vert a\right\vert ^{2}=\sum_{i,j=1}^{N}a_{ij}^{2}\text{,\qquad
}\left\vert F\right\vert ^{2}=\sum_{i=1}^{N}F_{i}^{2}\text{.}%
\]
Observe that%
\begin{equation}
\left\vert a\left(  \xi,\nu\right)  \right\vert \leq\left\vert a\right\vert
\left\vert \xi\right\vert \left\vert \nu\right\vert \text{\qquad for all }%
\xi,\nu\in%
\mathbb{R}
^{N}\text{.} \label{e_Schwarz inequality 2}%
\end{equation}

We further define a cut-off function $\eta_{n}$. Let $\eta\in C_{c}^{2}(%
\mathbb{R}
^{N})$ be such that $\eta(y)=1$ if $\left\vert y\right\vert \leq1$,
$\eta(y)=0$ if $\left\vert y\right\vert \geq3$, $0\leq\eta\leq1$ and
$\left\vert D\eta\right\vert \leq1${\large .} For each $n\in%
\mathbb{N}
$ we set $\eta_{n}(y):=\eta\left(  \frac{y}{n}\right)  $. Then $\eta
_{n}|_{B(0,n)}=1$, $\eta_{n}|_{%
\mathbb{R}
^{N}\setminus B(0,3n)}=0$ and $0\leq\eta_{n}\leq1$.It follows that%
\begin{equation}
\left\vert D\eta_{n}(y)\right\vert \leq\frac{1}{n}\text{,\qquad for all }y\in%
\mathbb{R}
^{N}\text{ and }n\in%
\mathbb{N}
\text{.} \label{e_properties cut-off-function}%
\end{equation}

If $B$ is a differential operator, then we write $B(Dx)$ (or $B(Dy)$) instead
of $B$ to emphasize that we derive with respect to $x$ (or $y$).

\section{Preliminaries}

\subsection{\label{Section_Construction of p}Construction of $p$}

We briefly recall the construction of a fundamental solution $p$. For more
details we refer to \cite[Chapter 2]{bl-buch} and \cite[Section 4]{Metafune1}
for the case $H=0$. The idea is to consider the Cauchy-Dirichlet problem%
\begin{equation}
\left\{
\begin{array}
[c]{ll}%
\partial_{t}u_{n}(x,t)=Au_{n}(x,t)\text{,} & x\in B\left(  0,n\right)  \text{,
}t>0\text{,}\\
u_{n}\left(  x,t\right)  =0\text{,} & x\in\partial B\left(  0,n\right)
\text{, }t>0\text{,}\\
u_{n}(x,0)=f(x)\text{,} & x\in B\left(  0,n\right)  \text{,}%
\end{array}
\right.  \label{e_Definition parabolic problem on Balls}%
\end{equation}
in the ball $B\left(  0,n\right)  $ for a given $f\in C\left(  \overline
{B(0,n)}\right)  $ and $n\in%
\mathbb{N}
$. By classical results for parabolic Cauchy problems in bounded domains (e.g.
\cite[Chapter III, \S 4]{Friedman}) we know that the problem
(\ref{e_Definition parabolic problem on Balls}) admits a unique solution%
\[
u_{n}\in C(\overline{B(0,n)}\times\lbrack0,\infty))\cap C^{2,1}(B(0,n)\times
(0,\infty))\text{.}%
\]
Moreover, Condition \ref{Condition_The main conditions} implies existence and
uniqueness of a Green function%
\[
0<p_{n}=p_{n}(x,y,t)\in C(B(0,n)\times B(0,n)\times(0,\infty))
\]
such that for each fixed $x\in B(0,n)$ it holds%
\[
p_{n}(x,\cdot,\cdot)\in C_{loc}^{2+\alpha,1+\alpha/2}\left(  B(0,n)\times
(0,\infty)\right)
\]
and for each fixed $y\in B(0,n)$ it holds%
\[
p_{n}\left(  \cdot,y,\cdot\right)  \in C_{loc}^{2+\alpha,1+\alpha/2}\left(
B(0,n)\times(0,\infty)\right)  \text{.}%
\]
Furthermore, for each fixed $y\in B(0,n)$ the function $p_{n}(\cdot,y,\cdot)$
satisfies%
\[
\partial_{t}p_{n}(x,y,t)=A(Dx)p_{n}(x,y,t)
\]
with respect to $(x,t)\in B(0,n)\times(0,\infty)$ and for each fixed $x\in
B(0,n)$ it holds%
\[
\partial_{t}p_{n}(x,y,t)=A^{\ast}(Dy)p_{n}(x,y,t)
\]
with respect to $\left(  y,t\right)  \in B\left(  0,n\right)  \times\left(
0,\infty\right)  $, where%
\begin{equation}
A^{\ast}=A_{0}-F\cdot D-(\operatorname{div}F+H)
\label{e_Definition adjoint operator}%
\end{equation}
is the formal adjoint operator of $A$, such that%
\begin{equation}
p_{n}^{\ast}(y,x,t)=p_{n}(x,y,t) \label{e_p*_n=p_n}%
\end{equation}
is the unique Green function for the problem%
\begin{equation}
\left\{
\begin{array}
[c]{ll}%
\partial_{t}v_{n}(y,t)=A^{\ast}v_{n}(y,t)\text{,} & y\in B(0,n)\text{,
}t>0\text{,}\\
v_{n}(y,t)=0\text{,} & y\in\partial B(0,n)\text{, }t>0\text{,}\\
v_{n}(y,0)=f(y)\text{,} & y\in B(0,n)\text{,}%
\end{array}
\right.  \label{e_Definition adjoint problem on Balls}%
\end{equation}
\ 

The proof of these statements one can find in \cite[Section III,
\S 7]{Friedman}. For the solution $u_{n}$ of Problem
(\ref{e_Definition parabolic problem on Balls}) we hence have%
\[
u_{n}(x,t)=\int_{B(0,n)}p_{n}(x,y,t)f(y)dy
\]
and%
\[
\int_{B(0,n)}p_{n}(x,y,t)f(y)dy\rightarrow f\left(  x\right)  \qquad\text{as
}t\rightarrow0\text{ for each }x\in B(0,n)
\]
and for the solution $v_{n}$ of Problem
(\ref{e_Definition adjoint problem on Balls}) we have%
\[
v_{n}(y,t)=\int_{B\left(  0,n\right)  }p_{n}(x,y,t)f(x)dx
\]
and%
\[
\int_{B\left(  0,n\right)  }p_{n}(x,y,t)f(x)dx\rightarrow f(y)\qquad\text{as
}t\rightarrow0\text{ for each }y\in B(0,n)\text{.}%
\]

Using the classical maximum principle, one obtains that the sequence $\left(
p_{n}\right)  $ is increasing with respect to $n\in%
\mathbb{N}
$. So we extend each function $p_{n}$ to $%
\mathbb{R}
^{N}\times%
\mathbb{R}
^{N}\times(0,+\infty)$ with value zero for $x,y\in%
\mathbb{R}
^{N}\setminus B(0,n)$ and still denote by $p_{n}$ the so obtained function. It
then holds%
\begin{equation}
p_{n}(x,y,t)\leq p_{n+1}(x,y,t) \label{e_p_n < p_n+1 < p}%
\end{equation}
for all $(x,y,t)\in%
\mathbb{R}
^{N}\times%
\mathbb{R}
^{N}\times(0,\infty)$ and $n\in%
\mathbb{N}
$. One sets%
\begin{equation}
p\left(  x,y,t\right)  =\lim_{n\rightarrow\infty}p_{n}\left(  x,y,t\right)
\text{,\qquad pointwise for }\left(  x,y,t\right)  \in%
\mathbb{R}
^{N}\times%
\mathbb{R}
^{N}\times\left(  0,\infty\right)  \text{.} \label{e_p=lim p_n}%
\end{equation}

\subsection{Properties of $p$}

We formulate the main properties of $p$ in the following proposition. The
proof one can find in \cite[Chapter 2]{bl-buch} and in \cite{Metafune1} for
the case $H=0$ (see also \cite{meine Diss}).

\begin{proposition}
\label{Proposition_properties of p}Under assumptions of Condition
\ref{Condition_The main conditions} the following statements hold.

\begin{enumerate}
\item[(i)] $\int_{%
\mathbb{R}
^{N}}p(x,y,t)dy\leq e^{-H_{0}t}$ for all $(x,t)\in%
\mathbb{R}
^{N}\times(0,\infty)$.

\item[(ii)] $0<p(x,y,t+s)=\int_{%
\mathbb{R}
^{N}}p(x,z,t)p(z,y,s)dz$ for all $x,y\in%
\mathbb{R}
^{N}$ and $s,t>0$.

\item[(iii)] For each fixed $y\in%
\mathbb{R}
^{N}$ it holds $\partial_{t}p(x,y,t)=A(Dx)p(x,y,t)$ for all $(x,t)\in%
\mathbb{R}
^{N}\times(0,\infty)$.

\item[(iv)] For each fixed $x\in%
\mathbb{R}
^{N}$ it holds $\partial_{t}p(x,y,t)=A^{\ast}(Dy)p(x,y,t)$ for all $(y,t)\in%
\mathbb{R}
^{N}\times(0,\infty)$.

\item[(v)] $u(x,t)=\int_{%
\mathbb{R}
^{N}}p(x,y,t)f(y)dy$ solves for each $f\in C_{b}(%
\mathbb{R}
^{N})$ problem (\ref{e_Definition parabolic problem}), $u\in C(%
\mathbb{R}
^{N}\times\lbrack0,\infty))\cap C_{loc}^{2+\alpha,1+\alpha/2}(%
\mathbb{R}
^{N}\times(0,\infty))$ and it holds%
\[
\left\vert u(x,t)\right\vert \leq e^{-H_{0}t}\left\Vert f\right\Vert _{\infty
}\qquad\text{for all }(x,t)\in%
\mathbb{R}
^{N}\times\lbrack0,\infty)\text{.}%
\]

\item[(vi)] $v(y,t)=\int_{%
\mathbb{R}
^{N}}p(x,y,t)f(x)dx$ solves for each $f\in C_{b}(%
\mathbb{R}
^{N})$ problem%
\begin{equation}
\left\{
\begin{array}
[c]{ll}%
\partial_{t}v(y,t)=A^{\ast}v(y,t)\text{,} & y\in%
\mathbb{R}
^{N}\text{, }t>0\text{,}\\
v(y,0)=f(y)\text{,} & y\in%
\mathbb{R}
^{N}\text{,}%
\end{array}
\right.  \label{e_Definition adjoint problem}%
\end{equation}
$v\in C(%
\mathbb{R}
^{N}\times\lbrack0,\infty))\cap C_{loc}^{2+\alpha,1+\alpha/2}(%
\mathbb{R}
^{N}\times(0,\infty))$ and it holds%
\[
\left\vert v(y,t)\right\vert \leq e^{-H_{0}^{\ast}t}\left\Vert f\right\Vert
_{\infty}\qquad\text{for all }(y,t)\in%
\mathbb{R}
^{N}\times\lbrack0,\infty)\text{.}%
\]

\item[(vii)] For any bounded Borel function $f\geq0$ with $f\not \equiv 0$ it
holds%
\[
\int_{%
\mathbb{R}
^{N}}p(x,y,t)f(y)dy>0\qquad\text{for all }(x,t)\in%
\mathbb{R}
^{N}\times(0,\infty)
\]
and%
\[
\int_{%
\mathbb{R}
^{N}}p(x,y,t)f(x)dx>0\qquad\text{for all }(y,t)\in%
\mathbb{R}
^{N}\times(0,\infty)
\]

(\emph{positivity}).
\end{enumerate}
\end{proposition}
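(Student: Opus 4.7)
The plan is to derive each of the seven items from the corresponding property of the Green functions $p_n$ on the balls $B(0,n)$, using maximum-principle arguments and monotone convergence to pass to the limit $n\to\infty$, together with interior Schauder estimates to justify the convergence of the derivatives.

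For item (i), I would observe that $w(x,t)=e^{-H_0 t}$ satisfies $\partial_t w-Aw=(-H_0+H)w\ge 0$ by Condition~\ref{Condition_The main conditions}(iii), $w(x,0)=1$, and $w\ge 0=u_n$ on $\partial B(0,n)$, so $w$ is a supersolution of (\ref{e_Definition parabolic problem on Balls}) with $f\equiv 1$. The parabolic maximum principle on bounded domains then gives $\int_{B(0,n)}p_n(x,y,t)\,dy=u_n(x,t)\le e^{-H_0 t}$, and letting $n\to\infty$ via monotone convergence and (\ref{e_p=lim p_n}) proves (i). The $L^\infty$ bound in (v) follows at once from (i) and $|u(x,t)|\le\|f\|_\infty\int p(x,y,t)\,dy$; the continuity at $t=0$ is obtained by splitting $u(x,t)-f(x)=(u(x,t)-u_n(x,t))+(u_n(x,t)-f(x))$, exploiting that $u_n(x,t)\to f(x)$ as $t\to0$ on $B(0,n)$ and that the first difference becomes small, uniformly on compacta, for $n$ large. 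The equation and regularity in the interior come from (iii) by differentiation under the integral sign. Item (vi) is obtained by the symmetric argument applied to problem (\ref{e_Definition adjoint problem on Balls}), using the bound $\operatorname{div}F+H\ge H_0^\ast$ and the identification (\ref{e_p*_n=p_n}).

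For the Chapman-Kolmogorov identity in (ii), I would first establish $p_n(x,y,t+s)=\int_{B(0,n)}p_n(x,z,t)\,p_n(z,y,s)\,dz$ from the semigroup property of the Dirichlet heat kernel on $B(0,n)$, and then apply monotone convergence in $n$ on both sides, the integrands being nondecreasing in $n$ by (\ref{e_p_n < p_n+1 < p}). For (iii) and (iv), fix $y\in\mathbb{R}^N$ and a relatively compact cylinder $K\times[\tau_1,\tau_2]\subset\mathbb{R}^N\times(0,\infty)$. For $n$ large, each $p_n(\cdot,y,\cdot)$ solves $\partial_t v=A(Dx)v$ on a neighbourhood of this cylinder, and classical interior Schauder estimates together with a uniform $L^\infty_{loc}$ bound on $\{p_n\}$ yield uniform bounds on the $C_{loc}^{2+\alpha,1+\alpha/2}$ norms. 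Arzelà-Ascoli extracts a subsequence converging in $C^{2,1}_{loc}$, and the limit must coincide with $p$ by (\ref{e_p=lim p_n}); the equation passes to the limit, giving (iii). The adjoint equation (iv) follows by the same argument with $x$ and $y$ exchanged, using $p_n^\ast(y,x,t)=p_n(x,y,t)$ from (\ref{e_p*_n=p_n}). Finally, (vii) is an immediate consequence of the pointwise strict positivity $p\ge p_{n_0}>0$ on $B(0,n_0)\times B(0,n_0)\times(0,\infty)$ for any $n_0$ containing $x$ (respectively $y$) and a subset of positive measure of $\{f>0\}$, which in turn follows from the strong maximum principle applied to each $p_n$.

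The main obstacle is the uniform $L^\infty_{loc}$ bound on $\{p_n\}$ needed to invoke Schauder in (iii)-(iv): a priori, $p_n\nearrow p$ only gives pointwise finiteness of the limit, and one cannot invoke the main bound (\ref{e_main result}) here without circularity. The natural tool is the classical interior Gaussian upper bound for Dirichlet Green functions of uniformly elliptic operators with locally bounded coefficients (which holds after localization, since the lower-order coefficients are bounded on any compact set), providing uniformity in $n$ on compacta away from $\partial B(0,n)$. Once this is in place, all the remaining statements reduce to routine monotone convergence and differentiation under the integral.
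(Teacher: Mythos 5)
The paper offers no proof of this proposition at all — it defers to \cite{bl-buch}, \cite{Metafune1} and \cite{meine Diss} — and your argument is precisely the standard one carried out in those references, built on the same ball-exhaustion construction that the paper recalls in Section \ref{Section_Construction of p} (supersolution $e^{-H_{0}t}$ plus the maximum principle for (i), monotone convergence for (ii), interior Schauder and Arzel\`a--Ascoli for (iii)--(vi), strict positivity of $p_{n}$ for (vii)), so this is essentially the same approach, correctly executed in outline. One caveat: the uniform local bound on $\{p_{n}\}$ that you correctly identify as the crux should not be quoted as a Gaussian bound for the Dirichlet Green function of $B(0,n)$, whose constants involve the suprema of the unbounded coefficients over all of $B(0,n)$ and hence degenerate as $n\to\infty$; the right tool is the interior local-boundedness estimate $\sup_{Q^{\prime}}p_{n}(x,\cdot,\cdot)\leq C\Vert p_{n}(x,\cdot,\cdot)\Vert_{L^{1}(Q)}$ for solutions of the adjoint equation on a fixed cylinder $Q$, whose constant depends only on the coefficients restricted to $Q$ and whose $L^{1}$ input is exactly your item (i) applied to $p_{n}$ (and the symmetric statement for $p_{n}(\cdot,y,\cdot)$).
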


The global boundedness of $p$ was studied for example in \cite{Trans},
\cite{Trans new} for the case of bounded diffusion coefficients $a_{ij}$,
$i,j=1,...,N$, and in \cite{meine Diss} for the general case. It was assumed
the existence of some Lyapunov function $1\leq V\in C^{2}(%
\mathbb{R}
^{N})$, that is%
\[
\lim_{\left\vert x\right\vert \rightarrow\infty}V(x)=\infty\qquad
\text{and}\qquad AV(x)\leq kV(x)\qquad\text{for all }x\in%
\mathbb{R}
^{N}%
\]
and some constant $k>-H_{0}$. Moreover, the coefficients of $A$ must growth
not faster as $V^{\frac{1}{N+1}}$. We remark that the existence of a Lyapunov
function yields the uniqueness of the bounded solution of Problem
(\ref{e_Definition parabolic problem}).

\medskip

The current case allows the nonuniqueness of the bounded solution and
arbitrary grow of the coefficients of $A$.

\medskip

The similar result one can find in \cite{Metafune new} under assumption of
bounded diffusion coefficients. Therefore the technics from \cite{Metafune
new} are unsuitable in the current case.

\section{Global boundedness of the fundamental solution}

From classical theory we know that if the operator $A$ has bounded
coefficients, then it holds%
\[
p(x,y,t)\leq Ct^{-\frac{N}{2}}\qquad\text{for all }(x,y,t)\in%
\mathbb{R}
^{N}\times%
\mathbb{R}
^{N}\times(0,\infty)
\]
for some constant $C>0$, depending on the supremum norm of coefficients of the
operator $A$ (see e. g. \cite[Chapter I, (6.12)]{Friedman}).

We will approximate the operator $A$ by operators%
\[
A^{(m)}=A_{0}^{(m)}+F^{(m)}\cdot D-H^{(m)}\text{,}\qquad m\in%
\mathbb{N}
\text{.}%
\]
Therefor, for $m\in%
\mathbb{N}
$ we set%
\[
a_{ij}^{\left(  m\right)  }=\eta_{m}a_{ij}+\lambda\left(  1-\eta_{m}\right)
\delta_{ij}\text{,}%
\]
where $\delta_{ii}=1$ and $\delta_{ij}=0$ if $i\neq j$, a constant $\lambda>0$
is given as in (\ref{e_ellipticity of Ao}) and the cut-off function $\eta_{m}$
is given as in Section \ref{Section_Notation}. Furthermore, we set%
\[
A_{0}^{(m)}=\sum_{i,j=1}^{N}D_{i}(a_{ij}^{(m)}D_{j})\text{,\qquad}F_{i}%
^{(m)}=\eta_{m}F_{i}%
\]
and%
\[
H^{(m)}=\eta_{m}H-F\cdot D\eta_{m}+\left\vert F\right\vert \left\vert
D\eta_{m}\right\vert \text{.}%
\]

We then obtain that the coefficients of $A^{(m)}$ are bounded and it holds%
\begin{equation}
a^{(m)}(\cdot)(\xi,\xi):=\sum_{i,j=1}^{N}a_{ij}^{(m)}(\cdot)\xi_{i}\xi_{j}%
\geq\lambda\left\vert \xi\right\vert ^{2}\text{.} \label{e_m-ellipticity}%
\end{equation}
Thus $A^{(m)}$ is elliptic. Moreover, we have%
\begin{equation}
H^{(m)}(x)\geq\eta_{m}(x)H(x)\geq H_{0} \label{e_H_o^(m)}%
\end{equation}
and%
\begin{equation}
\operatorname{div}F^{(m)}(x)+H^{(m)}(x)\geq\eta_{m}(x)(\operatorname{div}%
F(x)+H(x))\geq H_{0}^{\ast}\text{.} \label{e_H_o^ * ^(m)}%
\end{equation}
Let $p^{(m)}=p^{(m)}(x,y,t)$ be the fundamental solution for $A^{(m)}$. It
then holds%
\begin{equation}
\partial_{t}p^{(m)}=A_{0}^{(m)}(Dy)p^{(m)}-F^{(m)}\cdot Dp^{(m)}%
-(\operatorname{div}F^{(m)}+H^{(m)})p^{(m)} \label{e_equality for p_m}%
\end{equation}
with respect to $(y,t)\in%
\mathbb{R}
^{N}\times(0,\infty)$ for each fixed $x\in%
\mathbb{R}
^{N}$. In the next lemma we present some estimate of $L^{2}(%
\mathbb{R}
^{N})$ norm of $p^{(m)}$, $m\in%
\mathbb{N}
$. The calculation method was presented by John Nash in \cite{Nash} for the
case $F=0$, $H=0$ and $a_{ij}\in C_{b}^{1}(%
\mathbb{R}
^{N})$, $i,j=1,...,N$. In the proof a special case of
Gagliardo--Nirenberg--Sobolev inequality (see \cite{Talenti}) will be used%
\begin{equation}
S\left(  \int_{%
\mathbb{R}
^{N}}\left\vert u(x)\right\vert ^{\frac{2N}{N-2}}dx\right)  ^{\frac{N-2}{N}%
}\leq\int_{%
\mathbb{R}
^{N}}\left\vert Du(x)\right\vert ^{2}dx\text{,}
\label{e_Gagliardo-Nierenberg-Sobolev inequality}%
\end{equation}
where the constant $S$ is given by%
\begin{equation}
S=\frac{4^{\frac{N-1}{N}}\pi^{\frac{N+1}{N}}N(N-2)}{\Gamma\left(  \frac
{N+1}{2}\right)  ^{\frac{2}{N}}}\text{.} \label{e_S}%
\end{equation}

\begin{lemma}
\label{Lemma}For each $m\in%
\mathbb{N}
$ it holds%
\begin{equation}
\int_{%
\mathbb{R}
^{N}}p^{(m)}\left(  x,y,t\right)  ^{2}dy\leq Ce^{\gamma_{1}t}t^{-\frac{N}{2}}
\label{e_L^2_m}%
\end{equation}
for all $\left(  x,t\right)  \in%
\mathbb{R}
^{N}\times\left(  0,\infty\right)  $, where%
\[
C=\frac{2^{\frac{N-2}{2}}\Gamma(\frac{N+1}{2})}{\pi^{\frac{N+1}{2}}\left(
\lambda(N-2)\right)  ^{\frac{N}{2}}}%
\]
and%
\[
\gamma_{1}=-H_{0}^{\ast}-2H_{0}\geq0\text{.}%
\]

\end{lemma}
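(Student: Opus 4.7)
The plan is to follow Nash's energy method applied to the adjoint-type equation (3.6) satisfied by $u(y,t) := p^{(m)}(x, y, t)$ with $x \in \mathbb{R}^N$ fixed. Since $A^{(m)}$ has bounded coefficients, $p^{(m)}$ enjoys classical Gaussian bounds, so $u(\cdot,t)\in L^1\cap L^2\cap L^{2N/(N-2)}$ for each $t>0$ and the integrations by parts below are legitimate.

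First I would set $I(t):=\|u(\cdot,t)\|_{L^2}^{2}$, differentiate under the integral sign, and use (3.6) to obtain
\begin{equation*}
I'(t) \;=\; -2\int a^{(m)}(Du,Du)\,dy \;-\; 2\int u\,F^{(m)}\!\cdot Du\,dy \;-\; 2\int(\operatorname{div}F^{(m)}+H^{(m)})\,u^2\,dy.
\end{equation*}
Integration by parts on the drift term, $-2\int u\,F^{(m)}\!\cdot Du = \int \operatorname{div}F^{(m)}\,u^2$, together with ellipticity (3.2) and the lower bounds (3.4)--(3.5), gives
\begin{equation*}
I'(t) \;\le\; -2\lambda\,\|Du\|_{L^2}^{2} \;-\;(H_0^\ast+H_0)\,I(t) \;\le\; -2\lambda\,\|Du\|_{L^2}^{2} \;+\; \gamma_1\,I(t),
\end{equation*}
where the final inequality uses $-H_0\ge 0$ to pass to the coarser constant $\gamma_1=-H_0^\ast-2H_0$.

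Next, Proposition~\ref{Proposition_properties of p}(i) applied to $A^{(m)}$ yields $\|u(\cdot,t)\|_{L^1}\le e^{-H_0 t}$. Interpolating $\|u\|_{L^2}\le\|u\|_{L^1}^{2/(N+2)}\|u\|_{L^{2N/(N-2)}}^{N/(N+2)}$ between $L^1$ and $L^{2N/(N-2)}$ and combining with the GNS inequality (3.8) produces the Nash-type lower bound
\begin{equation*}
\|Du\|_{L^2}^{2} \;\ge\; S\,\|u\|_{L^2}^{2(N+2)/N}\,\|u\|_{L^1}^{-4/N} \;\ge\; S\,e^{4H_0 t/N}\,I(t)^{(N+2)/N},
\end{equation*}
whence the differential inequality
\begin{equation*}
I'(t) \;\le\; -2\lambda S\,e^{4H_0 t/N}\,I(t)^{(N+2)/N} \;+\; \gamma_1\,I(t).
\end{equation*}

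The key move is the substitution $J(t):=e^{-\gamma_1 t}I(t)$. It cancels the linear term exactly, and because $2\gamma_1+4H_0=-2H_0^\ast\ge 0$ the prefactor $e^{-2H_0^\ast t/N}$ that survives is $\ge 1$, so
\begin{equation*}
J'(t) \;\le\; -2\lambda S\,e^{-2H_0^\ast t/N}\,J(t)^{(N+2)/N} \;\le\; -2\lambda S\,J(t)^{(N+2)/N}.
\end{equation*}
Linearising via $g(t):=J(t)^{-2/N}$ converts this Bernoulli inequality into $g'(t)\ge 4\lambda S/N$. Integrating from $s>0$ to $t>s$, using $g(s)\ge 0$, and letting $s\downarrow 0$ gives $J(t)\le(N/(4\lambda S\,t))^{N/2}$, hence
\begin{equation*}
I(t)\;\le\;\bigl(N/(4\lambda S)\bigr)^{N/2}\,e^{\gamma_1 t}\,t^{-N/2}.
\end{equation*}
Substituting the explicit value of $S$ from (3.9) simplifies the constant to the stated $C$.

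The main conceptual step is the coupling of the $L^2$ energy identity with the $L^1$ bound from Proposition~\ref{Proposition_properties of p}(i); the main bookkeeping obstacle is tuning the exponential substitution, where the specific value $\gamma_1=-H_0^\ast-2H_0$ is chosen precisely so that the linear term cancels \emph{and} the residual exponent $-2H_0^\ast/N$ in front of $J^{(N+2)/N}$ is nonnegative, allowing the surviving $e^{-2H_0^\ast t/N}$ to be harmlessly bounded below by $1$.
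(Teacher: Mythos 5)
Your argument is Nash's method, exactly as in the paper: the same $L^2$ energy identity, the same use of the $L^1$ bound from Proposition \ref{Proposition_properties of p}(i) combined with the Gagliardo--Nirenberg--Sobolev inequality to get $\|Du\|_{L^2}^2\ge S\,e^{4H_0t/N}I(t)^{(N+2)/N}$, and the same Bernoulli-type linearisation $I\mapsto I^{-2/N}$. The only real difference is procedural: the paper never invokes Gaussian bounds for $p^{(m)}$, but instead runs the whole computation on the truncated quantity $\zeta_n=\int\eta_n^2(p^{(m)})^2\,dy$, so that every integration by parts is over a compact set; the price is a family of error terms $\omega_n\to0$ (absorbed by sacrificing a factor of $2$ in $2\lambda S-\nu_n\ge\lambda S$) and a temporal cut-off $\tau$ near $t=0$, where you simply integrate from $s>0$ and let $s\downarrow0$. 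Your shortcut is legitimate precisely because $A^{(m)}$ has bounded, H\"older continuous coefficients (this is the whole point of introducing $A^{(m)}$), so the classical parametrix bounds on $p^{(m)}$ and $D_yp^{(m)}$ do justify the unlocalised integrations by parts; as a side effect you retain the clean constant $(N/(4\lambda S))^{N/2}$, which is smaller than the paper's $(N/(2\lambda S))^{N/2}$ and a fortiori gives the stated bound. The one thing to state explicitly if you write this up is the decay of $D_yp^{(m)}(x,\cdot,t)$ (not just of $p^{(m)}$ itself), since that is what kills the boundary terms in both the diffusion and the drift integrations by parts.
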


\begin{proof}
We fix arbitrary $x\in%
\mathbb{R}
^{N}$ and $m\in%
\mathbb{N}
$. For each $n\in%
\mathbb{N}
$ we set%
\begin{equation}
\zeta_{n}(x,t)=\int_{%
\mathbb{R}
^{N}}\eta_{n}(y)^{2}p^{(m)}(x,y,t)^{2}dy\text{,\qquad}t\in(0,\infty)\text{.}
\label{e_Zetha_n}%
\end{equation}

Since (\ref{e_equality for p_m}) for any $t\in\left(  0,\infty\right)  $, it
holds%
\begin{align*}
\partial_{t}\zeta_{n}  &  =\int_{%
\mathbb{R}
^{N}}2\eta_{n}^{2}p^{(m)}\partial_{t}p^{(m)}dy\\
&  =\int_{%
\mathbb{R}
^{N}}2\eta_{n}^{2}p^{(m)}A_{0}^{\left(  m\right)  }(Dy)p^{(m)}dy-\int_{%
\mathbb{R}
^{N}}2\eta_{n}^{2}p^{(m)}F^{(m)}\cdot Dp^{(m)}dy\\
&  -\int_{%
\mathbb{R}
^{N}}2\eta_{n}^{2}(p^{(m)})^{2}\left(  \operatorname{div}F^{(m)}%
+H^{(m)}\right)  dy\text{.}%
\end{align*}
Integration by parts yields%
\begin{align}
-\partial_{t}\zeta_{n}  &  =\int_{%
\mathbb{R}
^{N}}2\eta_{n}^{2}a^{(m)}(Dp^{(m)},Dp^{(m)})dy\nonumber\\
&  +\int_{%
\mathbb{R}
^{N}}4\eta_{n}p^{(m)}a^{(m)}(D\eta_{n},Dp^{(m)})dy\nonumber\\
&  -\int_{%
\mathbb{R}
^{N}}2\eta_{n}(p^{(m)})^{2}\eta_{m}F\cdot D\eta_{n}dy\nonumber\\
&  +\int_{%
\mathbb{R}
^{N}}\eta_{n}^{2}(p^{(m)})^{2}\eta_{m}(\operatorname{div}F+H)dy+\int_{%
\mathbb{R}
^{N}}\eta_{n}^{2}(p^{(m)})^{2}\eta_{m}Hdy\nonumber\\
&  +\int_{%
\mathbb{R}
^{N}}\eta_{n}^{2}(p^{(m)})^{2}\left(  2\left\vert F\right\vert \left\vert
D\eta_{m}\right\vert -F\cdot D\eta_{m}\right)  dy \label{ee_1}%
\end{align}
Moreover, it holds%
\begin{align*}
\int_{%
\mathbb{R}
^{N}}4\eta_{n}p^{(m)}a^{(m)}(D\eta_{n},Dp^{(m)})dy  &  =\int_{%
\mathbb{R}
^{N}}2a^{(m)}(D(\eta_{n}p^{(m)}),D(\eta_{n}p^{(m)}))dy\\
&  -\int_{%
\mathbb{R}
^{N}}2(p^{(m)})^{2}a^{(m)}(D\eta_{n},D\eta_{n})dy\\
&  -\int_{%
\mathbb{R}
^{N}}2\eta_{n}^{2}a^{(m)}(Dp^{(m)},Dp^{(m)})dy\text{.}%
\end{align*}
Applying this identity to (\ref{ee_1}), we obtain%
\begin{align}
-\partial_{t}\zeta_{n}  &  =\int_{%
\mathbb{R}
^{N}}2a^{(m)}(D(\eta_{n}p^{(m)}),D(\eta_{n}p^{(m)}))dy\nonumber\\
&  -\int_{%
\mathbb{R}
^{N}}2(p^{(m)})^{2}a^{(m)}(D\eta_{n},D\eta_{n})dy\nonumber\\
&  -\int_{%
\mathbb{R}
^{N}}2\eta_{n}(p^{(m)})^{2}\eta_{m}F\cdot D\eta_{n}dy\nonumber\\
&  +\int_{%
\mathbb{R}
^{N}}\eta_{n}^{2}(p^{(m)})^{2}\eta_{m}\left(  \operatorname{div}F+H\right)
dy+\int_{%
\mathbb{R}
^{N}}\eta_{n}^{2}(p^{(m)})^{2}\eta_{m}Hdy\nonumber\\
&  +\int_{%
\mathbb{R}
^{N}}\eta_{n}^{2}(p^{(m)})^{2}\left(  2\left\vert F\right\vert \left\vert
D\eta_{m}\right\vert -F\cdot D\eta_{m}\right)  dy\text{.} \label{ee_2}%
\end{align}

We fix an arbitrary $t\in(0,\infty)$. We estimate the terms of (\ref{ee_2}).
Using (\ref{e_m-ellipticity}), Proposition \ref{Proposition_properties of p}
and (\ref{e_properties cut-off-function}), we obtain%
\[
\int_{%
\mathbb{R}
^{N}}2a^{(m)}(D(\eta_{n}p^{(m)}),D(\eta_{n}p^{(m)}))dy\geq\int_{%
\mathbb{R}
^{N}}2\lambda\left\vert D(\eta_{n}p^{(m)})\right\vert ^{2}dy\text{,}%
\]

\begin{align*}
-\int_{%
\mathbb{R}
^{N}}2(p^{(m)})^{2}a^{(m)}(D\eta_{n},D\eta_{n})dy  &  \geq-\int_{%
\mathbb{R}
^{N}}\frac{2}{n^{2}}(p^{(m)})^{2}\left\vert a^{(m)}\right\vert dy\\
&  \geq-\frac{2}{n}\left\Vert p^{(m)}(x,\cdot,t)\right\Vert _{\infty
}\left\Vert \left\vert a^{(m)}\right\vert \right\Vert _{\infty}e^{-H_{0}%
t}\text{,}%
\end{align*}%
\begin{align*}
-\int_{%
\mathbb{R}
^{N}}2\eta_{n}(p^{(m)})^{2}\eta_{m}F\cdot D\eta_{n}dy  &  \geq-\int_{%
\mathbb{R}
^{N}}\frac{2}{n}(p^{(m)})^{2}\eta_{m}\left\vert F\right\vert dy\\
&  \geq-\frac{2}{n}\left\Vert p^{(m)}(x,\cdot,t)\right\Vert _{\infty
}\left\Vert \left\vert F^{(m)}\right\vert \right\Vert _{\infty}e^{-H_{0}%
t}\text{,}%
\end{align*}

\begin{align*}
\int_{%
\mathbb{R}
^{N}}\eta_{n}^{2}(p^{(m)})^{2}  &  \eta_{m}\left(  \operatorname{div}%
F+H\right)  dy+\int_{%
\mathbb{R}
^{N}}\eta_{n}^{2}(p^{(m)})^{2}\eta_{m}Hdy\\
&  \geq\left(  H_{0}^{\ast}+H_{0}\right)  \zeta_{n}%
\end{align*}
and%
\[
\int_{%
\mathbb{R}
^{N}}\eta_{n}^{2}(p^{(m)})^{2}\left(  2\left\vert F\right\vert \left\vert
D\eta_{m}\right\vert -F\cdot D\eta_{m}\right)  dy\geq0\text{.}%
\]

\noindent We set%
\[
\theta=H_{0}^{\ast}+H_{0}\leq0
\]
\noindent Hence, from (\ref{ee_2}) it follows%
\begin{equation}
-\partial_{t}\zeta_{n}\geq\int_{%
\mathbb{R}
^{N}}2\lambda\left\vert D(\eta_{n}p^{(m)})\right\vert ^{2}dy+\theta\zeta
_{n}-\omega_{n}\text{,} \label{ee_5}%
\end{equation}
where%
\[
\omega_{n}=\omega_{n}(x,t)=\frac{2}{n}e^{-H_{0}t}\left\Vert p^{(m)}%
(x,\cdot,t)\right\Vert _{\infty}\left(  \left\Vert \left\vert a^{(m)}%
\right\vert \right\Vert _{\infty}+\left\Vert \left\vert F^{(m)}\right\vert
\right\Vert _{\infty}\right)  \text{.}%
\]
Moreover, $0\leq\omega_{n}(x,t)\rightarrow0$ as $n\rightarrow\infty$ for any
$(x,t)\in%
\mathbb{R}
^{N}\times(0,\infty)$. Furthermore, the Gagliardo--Nirenberg--Sobolev
inequality (\ref{e_Gagliardo-Nierenberg-Sobolev inequality}) implies%
\begin{equation}
\int_{%
\mathbb{R}
^{N}}\left\vert D\left(  \eta_{n}p^{(m)}\right)  \right\vert ^{2}dy\geq
S\left(  \int_{%
\mathbb{R}
^{N}}\left(  \eta_{n}p^{(m)}\right)  ^{\frac{2N}{N-2}}dy\right)  ^{\frac
{N-2}{N}} \label{ee_4}%
\end{equation}
for the Sobolev constant $S=S\left(  N\right)  $ given in (\ref{e_S}). Since%
\[
0<\int_{%
\mathbb{R}
^{N}}\eta_{1}p^{(m)}dy\leq\int_{%
\mathbb{R}
^{N}}\eta_{n}p^{(m)}dy\leq\int_{%
\mathbb{R}
^{N}}p^{(m)}dy\leq e^{-H_{0}t}\text{,}%
\]
it holds%
\[
0<e^{H_{0}t}\leq\frac{1}{\int_{%
\mathbb{R}
^{N}}\eta_{n}p^{(m)}dy}<\infty\text{.}%
\]
For $r>1$, this fact leads to%
\begin{align*}%
\biggl
(\int_{%
\mathbb{R}
^{N}}\left(  \eta_{n}p^{(m)}\right)   &  ^{\frac{2N}{N-2}}dy%
\biggr
)^{\frac{1}{r}}\\
=  &  \left(  \int_{%
\mathbb{R}
^{N}}\left(  \left(  \eta_{n}(p^{(m)})\right)  ^{\frac{2N}{\left(  N-2\right)
r}}\right)  ^{r}dy\right)  ^{\frac{1}{r}}\left(  \int_{%
\mathbb{R}
^{N}}\left(  \left(  \eta_{n}p^{(m)}\right)  ^{\frac{r-1}{r}}\right)
^{\frac{r}{r-1}}dy\right)  ^{\frac{r-1}{r}}\\
&  \cdot\left(  \frac{1}{\int_{%
\mathbb{R}
^{N}}\eta_{n}p^{(m)}dy}\right)  ^{\frac{r-1}{r}}\\
\geq &  \left\Vert \left(  \eta_{n}p^{(m)}\right)  ^{\frac{2N}{\left(
N-2\right)  r}}\right\Vert _{r}\left\Vert \left(  \eta_{n}p^{(m)}\right)
^{\frac{r-1}{r}}\right\Vert _{\frac{r}{r-1}}e^{H_{0}\frac{r-1}{r}t}\text{.}%
\end{align*}
H\"{o}lder's inequality then yields%
\begin{equation}
\left(  \int_{%
\mathbb{R}
^{N}}\left(  \eta_{n}p^{(m)}\right)  ^{\frac{2N}{N-2}}dy\right)  ^{\frac{1}%
{r}}\geq\left\Vert \left(  \eta_{n}p^{(m)}\right)  ^{\frac{2N}{\left(
N-2\right)  r}+\frac{r-1}{r}}\right\Vert _{1}e^{H_{0}\frac{r-1}{r}t}\text{.}
\label{ee_3}%
\end{equation}
Choosing $r=\frac{N+2}{N-2}$ in (\ref{ee_3}), we infer%
\[
\left(  \int_{%
\mathbb{R}
^{N}}\left(  \eta_{n}p^{(m)}\right)  ^{\frac{2N}{N-2}}dy\right)  ^{\frac
{N-2}{N+2}}\geq\left\Vert \eta_{n}^{2}(p^{(m)})^{2}\right\Vert _{1}%
e^{\frac{4H_{0}}{N+2}t}=\zeta_{n}e^{\frac{4H_{0}}{N+2}t}%
\]
and hence%
\[
\left(  \int_{%
\mathbb{R}
^{N}}\left(  \eta_{n}p^{(m)}\right)  ^{\frac{2N}{N-2}}dy\right)  ^{\frac
{N-2}{N}}\geq\zeta_{n}^{1+\frac{2}{N}}e^{\frac{4H_{0}}{N}t}\text{.}%
\]

We combine the above inequality with (\ref{ee_4}) and arrive at%
\begin{equation}
\int_{%
\mathbb{R}
^{N}}\left\vert D\left(  \eta_{n}p^{(m)}\right)  \right\vert ^{2}dy\geq
S\zeta_{n}^{1+\frac{2}{N}}e^{\frac{4H_{0}}{N}t}\text{.} \label{e_0030}%
\end{equation}
It then follows from (\ref{ee_5})%
\[
-\partial_{t}\zeta_{n}\geq2\lambda S\zeta_{n}^{1+\frac{2}{N}}e^{\frac{4H_{0}%
}{N}t}+\theta\zeta_{n}-\omega_{n}%
\]
and hence%
\[
-\partial_{t}\left(  e^{\theta t}\zeta_{n}\right)  \geq2\lambda S\zeta
_{n}^{1+\frac{2}{N}}e^{\theta t}e^{\frac{4H_{0}}{N}t}-e^{\theta t}\omega
_{n}\text{.}%
\]

We remark that for $n\in%
\mathbb{N}
$ it holds%
\begin{equation}
0<\delta=\delta(x,t):=\int_{%
\mathbb{R}
^{N}}p^{(m)}(x,y,t)\cdot\eta_{1}(y)^{2}p^{(m)}(x,y,t)dy\leq\zeta
_{n}(x,t)<\infty\label{ee_delta}%
\end{equation}

Taking into account (\ref{ee_delta}), we conclude%
\begin{equation}
\partial_{t}\left(  (e^{\theta t}\zeta_{n})^{-\frac{2}{N}}\right)  \geq
\frac{4\lambda S}{N}e^{-\frac{2\theta}{N}t}e^{\frac{4H_{0}}{N}t}-\frac{2}%
{N}\delta^{-1-\frac{2}{N}}e^{-\frac{2\theta}{N}t}\omega_{n}\text{.}
\label{ee_6}%
\end{equation}

Let further $t_{0}>0$ be such that $2t_{0}<t$. We define $\tau\in C^{\infty}(%
\mathbb{R}
)$ by $0\leq\tau\leq1$, $\tau(s)=0$ for $0\leq s\leq t_{0}$, $\tau(s)=1$ for
$s\geq2t_{0}$ and $\tau^{\prime}\geq0$. We multiply (\ref{ee_6}) by $\tau$ and
get%
\begin{align}
\partial_{t}\left(  \tau(t)(e^{\theta t}\zeta_{n}(x,t))^{-\frac{2}{N}}\right)
&  \geq\frac{4\lambda S}{N}\tau(t)e^{-\frac{2\theta}{N}t}e^{\frac{4H_{0}}{N}%
t}-\frac{2}{N}\tau(t)\delta^{-1-\frac{2}{N}}(x,t)e^{-\frac{2\theta}{N}t}%
\omega_{n}(x,t)\nonumber\\
&  +\tau^{\prime}(t)(e^{\theta t}\zeta_{n}(x,t))^{-\frac{2}{N}}\text{,}
\label{ee_9}%
\end{align}
where the last term on the right side is nonnegative. We set%
\[
\nu_{n}(x,t)=\delta(x,t)^{-1-\frac{2}{N}}e^{-\frac{4H_{0}}{N}t}\omega
_{n}(x,t)\text{.}%
\]
From (\ref{ee_9}) we conclude%
\[
\partial_{t}\left(  \tau(e^{\theta t}\zeta_{n})^{-\frac{2}{N}}\right)
\geq\frac{2}{N}\tau e^{-\frac{2\theta}{N}t}e^{\min\{0,H_{0}\}\frac{4}{N}%
t}\left(  2\lambda S-\nu_{n}\right)  \text{.}%
\]
Since $v_{n}(x,t)\rightarrow0$ as $n\rightarrow\infty$ for any $(x,t)\in%
\mathbb{R}
^{N}\times(0,\infty)$, we can chose $n_{0}\in%
\mathbb{N}
$ such that $2\lambda S-\nu_{n}\geq\lambda S$ for each $n\geq n_{0}$. For such
$n$ we obtain%
\[
\partial_{t}\left(  \tau(e^{\theta t}\zeta_{n})^{-\frac{2}{N}}\right)
\geq\frac{2\lambda S}{N}\tau e^{-\frac{2\theta}{N}t}e^{\frac{4H_{0}}{N}%
t}\text{.}%
\]

Integration from $t_{0}$ to $t$ yields%
\begin{align}
(e^{\theta t}\zeta_{n}(x,t))^{-\frac{2}{N}}  &  \geq\frac{2\lambda S}{N}%
\int_{t_{0}}^{t}\tau(s)e^{-\frac{2\theta}{N}s}e^{\frac{4H_{0}}{N}s}ds\geq
\frac{2\lambda S}{N}\int_{2t_{0}}^{t}e^{-\frac{2\theta}{N}s}e^{\frac{4H_{0}%
}{N}s}ds\nonumber\\
&  \geq\frac{2\lambda S}{N}e^{\frac{2H_{0}}{N}t}(t-2t_{0})\text{.}
\label{ee_7}%
\end{align}
For $n\geq n_{0}$ from (\ref{ee_7}) we deduce%
\begin{equation}
\zeta_{n}(x,t)^{-1}\geq\left(  \frac{2\lambda S}{N}\right)  ^{\frac{N}{2}%
}e^{(H_{0}^{\ast}+2H_{0})t}\left(  t-2t_{0}\right)  ^{\frac{N}{2}}\text{.}
\label{ee_8}%
\end{equation}

Since $\zeta_{n}(x,t)>0$ for any $(x,t)\in%
\mathbb{R}
^{N}\times(0,\infty)$, we obtain from (\ref{ee_8})%
\[
\zeta_{n}\leq\left(  \frac{N}{2\lambda S}\right)  ^{\frac{N}{2}}e^{\gamma
_{1}t}(t-2t_{0})^{-\frac{N}{2}}\text{,}%
\]
where $\gamma_{1}=-H_{0}^{\ast}-2H_{0}\geq0$.

Letting $n\rightarrow\infty,$ Fatou's lemma implies%
\[
\int_{%
\mathbb{R}
^{N}}p^{(m)}\left(  x,y,t\right)  ^{2}dy\leq\left(  \frac{N}{2\lambda
S}\right)  ^{\frac{N}{2}}e^{\gamma_{1}t}\left(  t-2t_{0}\right)  ^{-\frac
{N}{2}}%
\]

Since $t_{0}>0$ can be arbitrary close to $0$ and $(x,t)\in%
\mathbb{R}
^{N}\times(0,\infty)$ are arbitrary, we deduce%
\[
\int_{%
\mathbb{R}
^{N}}p^{(m)}\left(  x,y,t\right)  ^{2}dy\leq\left(  \frac{N}{2\lambda
S}\right)  ^{\frac{N}{2}}e^{\gamma_{1}t}t^{-\frac{N}{2}}%
\]
for all $(x,t)\in%
\mathbb{R}
^{N}\times(0,\infty)$. Using (\ref{e_S}), we then observe%
\[
\int_{%
\mathbb{R}
^{N}}p^{(m)}\left(  x,y,t\right)  ^{2}dy\leq Ce^{\gamma_{1}t}t^{-\frac{N}{2}}%
\]
for all $(x,t)\in%
\mathbb{R}
^{N}\times(0,\infty)$ and each $m\in%
\mathbb{N}
$.
\end{proof}

\medskip

The next step is to show that estimate (\ref{e_L^2_m}) is true for $p$ instead
of $p^{(m)}$. Therefore, we recall the construction of $p^{(m)}$. For fixed
$m\in%
\mathbb{N}
$ we consider the parabolic Cauchy problem%
\begin{equation}
\left\{
\begin{array}
[c]{ll}%
\partial_{t}u_{n}(x,t)=A^{(m)}u_{n}(x,t)\text{,} & x\in B\left(  0,n\right)
\text{, }t>0\text{,}\\
u_{n}\left(  x,t\right)  =0\text{,} & x\in\partial B\left(  0,n\right)
\text{, }t>0\text{,}\\
u_{n}(x,0)=f(x)\text{,} & x\in B\left(  0,n\right)  \text{,}%
\end{array}
\right.  \label{e_Definition parabolic problem on Balls_m}%
\end{equation}
for $f\in C(\overline{B(0,n)})$ and $n\in%
\mathbb{N}
$. We denote by $p_{n}^{(m)}$ the Green function for the problem
(\ref{e_Definition parabolic problem on Balls_m}). We remark that from
(\ref{e_p_n < p_n+1 < p}) and (\ref{e_p=lim p_n}) it follows that%
\[
p_{n}^{(m)}(x,y,t)\leq p^{(m)}(x,y,t)\text{,}\qquad(x,y,t)\in%
\mathbb{R}
^{N}\times%
\mathbb{R}
^{N}\times(0,\infty)\text{,}%
\]
for each $n\in%
\mathbb{N}
$. Note that we consider extended $p_{n}^{(m)}$ on $%
\mathbb{R}
^{N}\times%
\mathbb{R}
^{N}\times(0,\infty)$ with $p_{n}^{(m)}(x,y,t)=0$ for $x,y\in%
\mathbb{R}
^{N}\setminus B\left(  0,n\right)  $ as in Section
\ref{Section_Construction of p}. Since $A^{(m)}=A$ on $B(0,m)$, we deduce that
$p_{m}^{(m)}=p_{m}$, where $p_{m}$ is the Green function for the problem%
\[
\left\{
\begin{array}
[c]{ll}%
\partial_{t}u(x,t)=Au(x,t)\text{,} & x\in B(0,m)\text{, }t>0\text{,}\\
u(x,t)=0\text{,} & x\in\partial B(0,m)\text{, }t>0\text{,}\\
u(x,0)=f(x)\text{,} & x\in B(0,m)\text{.}%
\end{array}
\right.
\]
So we obtain%
\[
p_{m}(x,y,t)\leq p^{(m)}(x,y,t)\text{,}\qquad(x,y,t)\in%
\mathbb{R}
^{N}\times%
\mathbb{R}
^{N}\times(0,\infty)\text{,}%
\]
for each $m\in%
\mathbb{N}
$. Thus Lemma \ref{Lemma} yields%
\[
\int_{%
\mathbb{R}
^{N}}p_{m}\left(  x,y,t\right)  ^{2}dy\leq Ce^{\gamma_{1}t}t^{-\frac{N}{2}}%
\]
for all $(x,y,t)\in%
\mathbb{R}
^{N}\times%
\mathbb{R}
^{N}\times(0,\infty)$ and $m\in%
\mathbb{N}
$, where constants $C$ and $\gamma_{1}$ are given as in Lemma \ref{Lemma}.
Using (\ref{e_p=lim p_n}) and Fatou's lemma we conclude that%
\[
\int_{%
\mathbb{R}
^{N}}p\left(  x,y,t\right)  ^{2}dy\leq Ce^{\gamma_{1}t}t^{-\frac{N}{2}}%
\]
for all $(x,t)\in%
\mathbb{R}
^{N}\times(0,\infty)$. Applying this estimate to the adjoint problem
(\ref{e_Definition adjoint problem}), we obtain%
\[
\int_{%
\mathbb{R}
^{N}}p\left(  x,y,t\right)  ^{2}dx\leq Ce^{\gamma_{2}t}t^{-\frac{N}{2}}%
\]
for all $(y,t)\in%
\mathbb{R}
^{N}\times(0,\infty)$, where%
\begin{equation}
\gamma_{2}=-2H_{0}^{\ast}-H_{0}\geq0\text{.} \label{e_Gamma_2}%
\end{equation}
We formulate this result in the following corollary.

\begin{corollary}
\label{Corollary}Under assumptions of condition
\ref{Condition_The main conditions} it holds%
\[
\int_{%
\mathbb{R}
^{N}}p\left(  x,y,t\right)  ^{2}dy\leq Ce^{\gamma_{1}t}t^{-\frac{N}{2}}%
\qquad\text{for all }(x,t)\in%
\mathbb{R}
^{N}\times(0,\infty)
\]
and%
\[
\int_{%
\mathbb{R}
^{N}}p\left(  x,y,t\right)  ^{2}dx\leq Ce^{\gamma_{2}t}t^{-\frac{N}{2}}%
\qquad\text{for all }(y,t)\in%
\mathbb{R}
^{N}\times(0,\infty)\text{,}%
\]
where $\gamma_{1}$ and $C$ are given as in Lemma \ref{Lemma} and $\gamma_{2}$
is given as in (\ref{e_Gamma_2}).
\end{corollary}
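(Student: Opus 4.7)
The plan is to transfer the $L^2$ estimate from the approximating fundamental solutions $p^{(m)}$ (where Lemma \ref{Lemma} applies) to the true fundamental solution $p$ via the ball Green functions $p_m$ as an intermediate object, and then to iterate the argument on the formally adjoint problem.

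First, I would recall the construction of $p^{(m)}$ as the monotone limit (as $n\to\infty$) of Green functions $p_n^{(m)}$ on balls $B(0,n)$, so in particular $p_n^{(m)}\leq p^{(m)}$ pointwise on $\mathbb{R}^N\times\mathbb{R}^N\times(0,\infty)$ after extension by zero. The key observation is that the coefficients of $A^{(m)}$ were defined via the cut-off $\eta_m$ so as to agree with those of $A$ on $B(0,m)$; hence on the ball $B(0,m)$ the Cauchy--Dirichlet problem for $A^{(m)}$ coincides with that for $A$, which gives $p_m^{(m)}=p_m$. Combined with the preceding monotonicity, this yields the pointwise bound $p_m(x,y,t)\leq p^{(m)}(x,y,t)$ for every $m\in\mathbb{N}$.

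Next, applying Lemma \ref{Lemma} to $p^{(m)}$ and using the above pointwise domination immediately gives
\[
\int_{\mathbb{R}^N}p_m(x,y,t)^2\,dy\leq Ce^{\gamma_1 t}t^{-N/2}
\]
uniformly in $m\in\mathbb{N}$. Since $p_m\uparrow p$ pointwise by (\ref{e_p_n < p_n+1 < p}) and (\ref{e_p=lim p_n}), Fatou's lemma transfers this bound to $p$, establishing the first inequality of the corollary.

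For the second inequality, I would repeat the entire argument for the formal adjoint operator $A^{\ast}=A_0-F\cdot D-(\operatorname{div}F+H)$. From (\ref{e_p*_n=p_n}) we know that $p^{\ast}(y,x,t)=p(x,y,t)$ is the fundamental solution associated with $A^{\ast}$, so an $L^2$ bound in $dx$ for $p(x,y,t)$ is exactly an $L^2$ bound in $dy$ for $p^{\ast}(y,x,t)$. The roles of the two lower bounds in Condition \ref{Condition_The main conditions}(iii) are then exchanged: the potential of $A^{\ast}$ is $\operatorname{div}F+H\geq H_0^{\ast}$, while the potential of $(A^{\ast})^{\ast}=A$ is $H\geq H_0$. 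Running the proof of Lemma \ref{Lemma} with this swap replaces $\gamma_1=-H_0^{\ast}-2H_0$ by $\gamma_2=-2H_0^{\ast}-H_0$, and the constant $C$ is the same since it depends only on $N$ and $\lambda$. The main conceptual point — and the only step that requires care — is verifying that the adjoint approximation $(A^{(m)})^{\ast}$ still enjoys the lower bound on its potential with the same constant $H_0^{\ast}$, which is precisely (\ref{e_H_o^ * ^(m)}); no new calculation is needed. This concludes the proof.
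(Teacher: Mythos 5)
Your proposal is correct and follows essentially the same route as the paper: the identification $p_m^{(m)}=p_m$ via $A^{(m)}=A$ on $B(0,m)$, the resulting domination $p_m\leq p^{(m)}$, Fatou's lemma with $p_m\uparrow p$, and finally the application of the same argument to the adjoint problem with $H_0$ and $H_0^{\ast}$ exchanged to produce $\gamma_2$. Your discussion of the adjoint step is in fact slightly more detailed than the paper's, which simply invokes the estimate for problem (\ref{e_Definition adjoint problem}).
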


\medskip

We can now show a global boundedness of $p(\cdot,\cdot,t)$ on $%
\mathbb{R}
^{N}\times%
\mathbb{R}
^{N}$ for each $t\in(0,\infty)$ using Proposition
\ref{Proposition_properties of p} (ii) (the Chapman--Kolmogorov equation).

\begin{theorem}
\label{Theorem}Under assumptions of condition
\ref{Condition_The main conditions} it holds%
\begin{equation}
p(x,y,t)\leq C_{N,\lambda}e^{\gamma t}t^{-\frac{N}{2}}
\label{e_main inequality}%
\end{equation}
for all $(x,y,t)\in%
\mathbb{R}
^{N}\times%
\mathbb{R}
^{N}\times(0,\infty)$, where%
\[
C_{N,\lambda}=\frac{2^{N-1}\Gamma\left(  \frac{N+1}{2}\right)  }{\pi
^{\frac{N+1}{2}}(\lambda(N-2))^{\frac{N}{2}}}%
\]
and%
\[
\gamma=-\frac{3}{4}(H_{0}^{\ast}+H_{0})\geq0\text{.}%
\]

\end{theorem}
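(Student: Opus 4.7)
The plan is to combine the Chapman--Kolmogorov identity from Proposition \ref{Proposition_properties of p}(ii) with the two $L^{2}$ estimates of Corollary \ref{Corollary} via the Cauchy--Schwarz inequality. The key observation is that although each of the two $L^{2}$ bounds controls only one variable of $p$, Chapman--Kolmogorov splits $p(x,y,t)$ at an intermediate point $z$ over which we integrate, producing one factor in which $x$ is fixed and another in which $y$ is fixed. Cauchy--Schwarz in $z$ then lets us use both estimates simultaneously.

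Concretely, I would first fix $(x,y,t)\in\mathbb{R}^{N}\times\mathbb{R}^{N}\times(0,\infty)$ and use Proposition \ref{Proposition_properties of p}(ii) with $s=t/2$ to write
\[
p(x,y,t)=\int_{\mathbb{R}^{N}}p(x,z,t/2)\,p(z,y,t/2)\,dz.
\]
Applying Cauchy--Schwarz in $z$ and then Corollary \ref{Corollary} (the first bound to the factor with $x$ fixed, the second to the factor with $y$ fixed) gives
\[
p(x,y,t)\leq\Bigl(\int_{\mathbb{R}^{N}}p(x,z,t/2)^{2}dz\Bigr)^{\!1/2}\Bigl(\int_{\mathbb{R}^{N}}p(z,y,t/2)^{2}dz\Bigr)^{\!1/2}\leq C\,e^{(\gamma_{1}+\gamma_{2})t/4}(t/2)^{-N/2}.
\]

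The remaining work is purely bookkeeping: compute $\gamma_{1}+\gamma_{2}=-3(H_{0}^{\ast}+H_{0})$ so that $(\gamma_{1}+\gamma_{2})/4=-\tfrac{3}{4}(H_{0}^{\ast}+H_{0})=\gamma$, and absorb the factor $2^{N/2}$ coming from $(t/2)^{-N/2}$ into $C$. Since $C=\frac{2^{(N-2)/2}\Gamma((N+1)/2)}{\pi^{(N+1)/2}(\lambda(N-2))^{N/2}}$, one gets $2^{N/2}\cdot C=\frac{2^{N-1}\Gamma((N+1)/2)}{\pi^{(N+1)/2}(\lambda(N-2))^{N/2}}=C_{N,\lambda}$, yielding exactly \eqref{e_main inequality}.

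There is no serious obstacle here; the heavy lifting was done in Lemma \ref{Lemma} and Corollary \ref{Corollary}. The only point that requires a moment of care is that one must use \emph{both} variants of the $L^{2}$ estimate (one controlling integration in $y$, one controlling integration in $x$), which is why the symmetric splitting at $t/2$ via Chapman--Kolmogorov is the right move, and why the resulting exponential constant symmetrizes $\gamma_{1}$ and $\gamma_{2}$ into the averaged value $\gamma=-\tfrac{3}{4}(H_{0}^{\ast}+H_{0})$.
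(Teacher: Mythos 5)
Your proposal is correct and is exactly the paper's own argument: Chapman--Kolmogorov at time $t/2$, Cauchy--Schwarz in the intermediate variable $z$, then the two $L^{2}$ bounds of Corollary \ref{Corollary}. Your bookkeeping of the constants ($(\gamma_{1}+\gamma_{2})/4=\gamma$ and $2^{N/2}C=C_{N,\lambda}$) is also right.
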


\begin{proof}
Using H\"{o}lder's inequality and Corollary \ref{Corollary}, we obtain%
\begin{align*}
p(x,y,t)  &  =\int_{%
\mathbb{R}
^{N}}p\left(  x,z,\frac{t}{2}\right)  p\left(  z,y,\frac{t}{2}\right)  dz\\
&  \leq\left(  \int_{%
\mathbb{R}
^{N}}p\left(  x,z,\frac{t}{2}\right)  ^{2}dz\right)  ^{\frac{1}{2}}\left(
\int_{%
\mathbb{R}
^{N}}p\left(  z,y,\frac{t}{2}\right)  ^{2}dz\right)  ^{\frac{1}{2}}\\
&  \leq C_{N,\lambda}e^{\gamma t}t^{-\frac{N}{2}}\text{.}%
\end{align*}

\hfill
\end{proof}

\begin{example}
It is well known that if $A=\sum_{i=1}^{N}D_{ii}$, then%
\[
p(x,y,t)=\frac{1}{2^{N}\pi^{\frac{N}{2}}}\exp\left(  -\frac{\left\vert
x-y\right\vert ^{2}}{4t}\right)  t^{-\frac{N}{2}}\text{\qquad for }(x,y,t)\in%
\mathbb{R}
^{N}\times%
\mathbb{R}
^{N}\times(0,\infty)\text{.}%
\]
In this case we have $H_{0}=H_{0}^{\ast}=0$ and $\lambda=1$. One sees easyly
that $p$ satisfies inequality (\ref{e_main inequality}).
\end{example}

\end{document}